\numberwithin{figure}{section}
\theoremstyle{plain}
\newtheorem{thm}{Theorem}[section]
\newtheorem{prop}[thm]{Proposition}
\newtheorem{cor}{Corollary}[thm]
\theoremstyle{definition}
\newtheorem{defn}{Definition}[section]
\newtheorem{exmp}{Example}[section]
\theoremstyle{remark}
\title{A note on $p^\lambda$-convex set in a complete Riemannian manifold}
\author[A. A. Shaikh, C. K. Mondal and I. Iqbal]{$^1$Absos Ali Shaikh, $^2$Chandan Kumar Mondal and $^3$Akhlad Iqbal}
\address{\noindent\newline $^{1,2}$Department of Mathematics,\newline University of
Burdwan, Golapbag,\newline Burdwan-713104,\newline West Bengal, India}
\email{aask2003@yahoo.co.in, aashaikh@math.buruniv.ac.in}
\email{chan.alge@gmail.com}
\address{\noindent\newline $^{3}$Department of Mathematics,\newline Aligarh Muslim University,\newline Aligarh-202002,\newline UP, India}
\email{ akhlad6star@gmail.com, akhlad.mm@amu.ac.in}
\begin{document}
\begin{abstract}
In this paper we have generalized the notion of $\lambda$-radial contraction in complete Riemannian manifold and developed the concept of $p^\lambda$-convex function. We have also given a counter example proving the fact that in general $\lambda$-radial contraction of a geodesic is not necessarily a geodesic. We have also deduced some relations between geodesic convex sets and $p^\lambda$-convex sets and showed that under certain conditions they are equivalent.
\end{abstract}
\keywords{$\lambda$-radial contraction, $p^\lambda$-convex, Riemannian manifold}
\subjclass[2010]{52A20, 52A30, 52A42, 53B20, 53C20, 53C22}
\maketitle
\section{Introduction}
The notion of convexity is a basic topic of modern mathematics, specially, in optimization theory and linear programming. But only convexity is not sufficient to study the behavior of a set. Hence there are many generalization of convexity not only in Euclidean space but also in manifold. Some related work on this topic can be found in \cite{CK17, LAY82, RAP94, UDR94}.\\
\indent In 2010 Beltagy and Shenawy \cite{BS10} defined the notion of $\lambda$-radial contraction in Euclidean space and proved that under such a contraction a line remains invariant. In this paper we have defined $\lambda$-radial contraction in a complete Riemannian manifold and showed that, in general, $\lambda$-radial contraction of a geodesic need not be a geodesic. In fact convexity property of a subset in a Riemannian manifold is not invariant under the $\lambda$-radial contraction and hence a new type of convexity is needed. Motivating by these ideas we have defined $\lambda$-convex set with respect to a point $p$, briefly called $p^\lambda$-convex set, and also $p$-convex set in a complete Riemannian manifold. It is proved that in a complete Riemannian manifold geodesic convexity and $p$-convexity are equivalent under certain conditions. We have also proved that if a set contains an interior point $p$ then there exists some $\lambda$ such that the set is $p^\lambda$-convex. We have also showed that every $p^\lambda$-convex set contains a geodesic convex set.
\section{Radial contraction and $p^\lambda$-convexity}

Let $(M,g)$ be a complete $n$-dimensional Riemannian manifold with Levi-Civita connection $\nabla$. For any two points $x,y\in M$, let $\gamma_{xy}:[0,1]\rightarrow M$ be the length minimizing geodesic \cite{LAN99} from $x$ to $y$ such that $\gamma_{xy}(0)=x$ and $\gamma_{xy}(1)=y$. For a fixed $p$ and $x$ in $M$, consider the set
$$G_p^x=\Big\{\gamma_{px}'(0):\forall \gamma_{px}:[0,1]\rightarrow M, \gamma_{px}(0)=p \text{ and }\gamma_{px}(1)=x\Big\}.$$
Now $G^x_p$ is a subset of $T_p(M)$. Since $(M,g)$ is complete, hence $G_p^x\neq \phi$. Let $\eta_p:M\rightarrow T_p(M)$ be a function such that $\eta_p(x)\in G_p^x\ \forall x\in M$. The function $\eta_p$ is called direction function at $p$.
\begin{defn}\cite{RAP94}
A subset $A$ of $M$ is called \textit{geodesic convex} if for any two points $x,y\in A$ there exists a geodesic $\gamma_{xy}:[0,1]\rightarrow M$ such that $\gamma_{xy}(0)=x$ and $\gamma_{xy}(1)=y$ and $\gamma_{xy}(t)\in A, \forall t\in [0,1]$
\end{defn} 
\begin{defn}
For a fixed point $p\in M$ and for fixed $\lambda\in (0,1]$ define $\eta_pC^\lambda:M\rightarrow M$ by
$$\eta_pC^\lambda(x)=\gamma_{px}(\lambda)\quad \text{for }x\in M,$$
where $\gamma_{px}:[0,1]\rightarrow M$ is a geodesic such that $\gamma_{px}'(0)=\eta_p(x)$.
This function is called $\lambda$-\emph{radial contraction} of $x$ based at $p$ with respect to $\eta_p$.
\end{defn}
Since there is exactly one length minimizing geodesic between any two points $p$ and $x$ in $(M,g)$ whose initial vector is $\eta_p(x)$ hence $\eta_pC^\lambda$-function is well defined in $(M,g)$.\\
\indent Beltagy and Shenawy \cite{BS10} studied such type of function in an Euclidean space. They called it $\lambda$-\textit{radial contraction} based at $p$.
\begin{defn}\cite{BS10}
Let $A$ be a nonempty subset of $\mathbb{R}^n$. For a fixed point $p\in A$ and a fixed real number $\alpha\in(0,1)$, The $\lambda$-\textit{radial contraction} of $A$ based at
$p$ is denoted by $C^\lambda_p(A)$ and is defined by
$$C^\lambda_p(A)=\{\lambda x+(1-\lambda)p: x\in A\}.$$

\end{defn}
 We have generalized this notion in a complete Riemannian manifold and developed a new type of convex set in $(M,g)$.\\
\indent For a fixed $\lambda\in [0,1]$, the $\lambda$-radial contraction of a non-empty subset $A$ based at $p$ with respect to $\eta_p$ can be defined as
$$\eta_pC^\lambda(A)=\Big\{\eta_pC^\lambda(x):x\in A\Big\}.$$
In $\mathbb{R}^n$, for each $x\in \mathbb{R}^n$,  $\eta_p(x)$ has only one choice hence we denote $\eta_pC^\lambda$ by $C_p^\lambda$. \\
\indent In \cite{BS10} it has been shown that $\lambda$-radial contraction of a line segment is also a line segment. But in general this does not hold, see Example below.
\begin{exmp}
Take $M=\mathbb{S}^2$ with the line element $ds^2=d\theta^2+\sin^2\theta d\phi^2$. Now take two points $x,y$ on the equator and take $p$ as the north pole of $\mathbb{S}^2$. Then if we choose $\lambda=\frac{1}{2}$, then $\alpha=\eta_pC^{\frac{1}{2}}(\gamma_{xy})$ is not a geodesic, see Figure \ref{sphere1}, since the circle containing $\alpha$ has radius less than one, hence it can not be a geodesic.
\end{exmp}
 \begin{figure}[h]
     \centering
     \includegraphics[width=.40\textwidth]{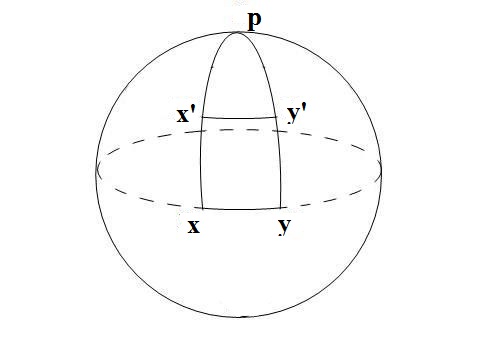} 
     \caption{} 
        
     \label{sphere1}
 \end{figure}
 Naturally the following question arises.

 What is the sufficient condition that the map $\eta_pC^\lambda$ transforms geodesic to geodesic for a fixed $\eta_p$?

\begin{defn}
Let $p\in M$ and $A\neq \phi$ be a subset of $M$ with the direction function $\eta_p$. Then $A$ is called $p^\lambda$-\textit{convex} with respect to $\eta_p$ for a fixed $\lambda\in (0,1]$ if for any two points $x,y\in A$ there is a length minimizing geodesic connecting $\eta_pC^\lambda(x)$ and $\eta_pC^\lambda(y)$ belongs to $A$, i.e, $\gamma_{x'y'}(t)\in A, \forall t\in [0,1]$, where $x'=\eta_pC^\lambda(x)$ and $y'=\eta_pC^\lambda(y)$. 
\end{defn}
Unless there are some confusion, by a $p^\lambda$-convex set we always mean $p^\lambda$-convex with respect to some direction function $\eta_p$. 
\begin{defn}
If a non-empty subset $A$ of $M$ is $p^\lambda$-convex for all $\lambda \in (0,1]$, then $A$ is called \emph{totally $p$-convex}.
\end{defn}
\begin{thm}
Let $A$ be a subset of $M$ containing more than two points. Then $A$ is geodesically convex set in $(M,g)$ if and only if for each point $p\in A$ there exists a  direction function $\eta_p$ such that $A$ is $p^\lambda$-convex with respect to $\eta_p\ \forall \lambda\in (0,1]$, i.e, $A$ is totally $p^\lambda$-convex. 
\end{thm}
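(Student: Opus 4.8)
The plan is to establish the two implications separately, and I expect the content to sit almost entirely in the ``only if'' direction.

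For the forward implication, suppose $A$ is geodesically convex and fix $p\in A$. The first step is to build a direction function $\eta_p$ out of the geodesic convexity of $A$: for each $x\in A$ the convexity of $A$ supplies a length-minimizing geodesic $\gamma_{px}\colon[0,1]\rightarrow M$ with $\gamma_{px}(0)=p$, $\gamma_{px}(1)=x$ and $\gamma_{px}(t)\in A$ for all $t\in[0,1]$; I would fix one such $\gamma_{px}$ for every $x\in A$, set $\eta_p(x)=\gamma_{px}'(0)$, and assign $\eta_p(x)$ arbitrarily in $G_p^x$ for $x\notin A$. Since $\gamma_{px}'(0)\in G_p^x$, this is a legitimate direction function, and because a geodesic is determined by its initial point and velocity, the geodesic appearing in the definition of $\eta_pC^\lambda(x)$ is exactly the chosen $\gamma_{px}$. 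Hence for every $\lambda\in(0,1]$ and every $x\in A$ we have $\eta_pC^\lambda(x)=\gamma_{px}(\lambda)\in A$, so $\eta_pC^\lambda(A)\subseteq A$. Given $x,y\in A$, the points $x'=\eta_pC^\lambda(x)$ and $y'=\eta_pC^\lambda(y)$ then lie in $A$, and a second use of geodesic convexity produces a length-minimizing geodesic from $x'$ to $y'$ inside $A$; this is precisely the $p^\lambda$-convexity condition, and it holds for all $\lambda\in(0,1]$, i.e. $A$ is totally $p^\lambda$-convex.

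For the converse, assume that to each $p\in A$ there is associated a direction function $\eta_p$ making $A$ $p^\lambda$-convex for every $\lambda\in(0,1]$. The observation that does the work is that $\lambda=1$ reduces the radial contraction to the identity: the geodesic $\gamma_{px}$ in the definition of $\eta_pC^{\lambda}$ satisfies $\gamma_{px}(1)=x$, so $\eta_pC^1=\mathrm{id}_M$. Applying $p^1$-convexity (legitimate since $1\in(0,1]$) to an arbitrary pair $x,y\in A$ now says exactly that some length-minimizing geodesic from $x=\eta_pC^1(x)$ to $y=\eta_pC^1(y)$ lies in $A$, which is geodesic convexity of $A$.

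The only place that calls for care is the construction of $\eta_p$ in the forward direction, where one must check that choosing, for each $x\in A$, a single $p$-to-$x$ geodesic inside $A$ yields a direction function whose associated $\lambda$-radial contraction never leaves $A$; both facts are immediate from the definitions once one notes that $\eta_p(x)\in G_p^x$ and that geodesics are determined by their initial data. I expect no real obstacle beyond this bookkeeping; in particular the converse needs neither a limiting argument in $\lambda$ nor any closedness assumption on $A$, and the hypothesis that $A$ contains more than two points is used nowhere essential and merely serves to exclude degenerate sets.
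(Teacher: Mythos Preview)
Your proposal is correct and follows essentially the same route as the paper: construct $\eta_p$ from the convexity-supplied geodesics so that $\eta_pC^\lambda(A)\subseteq A$ and then apply convexity a second time for the forward direction, and simply set $\lambda=1$ for the converse. The only differences are cosmetic---you are more explicit about extending $\eta_p$ to $M\setminus A$ and about geodesics being determined by initial data---and your remark that the ``more than two points'' hypothesis is inessential is a fair observation not made in the paper.
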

\begin{proof}
Let $A$ be a geodesically convex subset of $M$. Take a point $p\in A$. By convexity for any point $x\in A$ there exists a geodesic $\gamma_{px}:[0,1]\rightarrow M$ such that $\gamma_{px}(0)=p$, $\gamma_{px}(1)=x$ and $\gamma_{px}(t)\in A\ \forall t\in[0,1]$. Define the function $\eta_p:M\rightarrow T_p(M)$ by
$$\eta_p(x)=\gamma_{px}'(0)\quad \forall x\in A.$$
Now for any two points $x,y$ in $A$, the points $\eta_pC^\lambda(x), \eta_pC^\lambda(y)\in A\ \forall \lambda\in (0,1]$. Hence there exists a unique geodesic connecting $\eta_pC^\lambda(x)$ and $\eta_pC^\lambda(y)$ that will belongs to $A$ for each $\lambda\in (0,1]$. Hence $A$ is totally $p^\lambda$-convex with respect to $\eta_p$.\\
\indent Conversely let $A$ be totally $p$-convex for each $p\in A$. Take any two points $x,y$ in $A$. But $x=\eta_pC^\lambda(x)$ and $y=\eta_pC^\lambda(y)$, where $\lambda=1$. So using the definition of totally $p$-convex we can say that there is a geodesic arc connecting $x$ and $y$ belongs to $A$. So $A$ is geodesically convex. 
\end{proof}
\begin{cor}
For a subset $A$ in $M$ containing more than two points the following statements are equivalent:\begin{enumerate}
\item $A$ is geodesically convex,
\item $A$ is totally $p$-convex for any $p\in A$.
\end{enumerate} 
\end{cor}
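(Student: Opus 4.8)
The plan is to read this corollary off the preceding Theorem~2.1 almost verbatim, so the whole task reduces to a vocabulary check: reconciling \emph{totally $p$-convex} (Definition~2.6) with the phrase \emph{totally $p^\lambda$-convex} used in Theorem~2.1, together with a careful reading of the quantifier on the direction function $\eta_p$.

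First I would unwind Definition~2.6 in conjunction with the convention recorded just after Definition~2.5: ``$A$ is totally $p$-convex'' means that $A$ is $p^\lambda$-convex for every $\lambda\in(0,1]$, where each such clause is understood to hold with respect to \emph{some} direction function $\eta_p$. This is exactly the conclusion appearing in Theorem~2.1, which asserts the existence of a single direction function $\eta_p$ making $A$ be $p^\lambda$-convex with respect to $\eta_p$ for all $\lambda\in(0,1]$. Strictly speaking the definition permits $\eta_p$ to depend on $\lambda$ whereas the theorem produces one $\eta_p$ uniform in $\lambda$; I will point out below why this asymmetry is harmless.

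For the implication (1) $\Rightarrow$ (2): assuming $A$ geodesically convex, fix an arbitrary $p\in A$ and invoke Theorem~2.1 to obtain a direction function $\eta_p$ with respect to which $A$ is $p^\lambda$-convex for every $\lambda\in(0,1]$; hence $A$ is totally $p$-convex at $p$, and since $p$ was arbitrary, (2) follows. For (2) $\Rightarrow$ (1): fix any $p\in A$; by (2) the set $A$ is, in particular, $p^1$-convex with respect to some $\eta_p$. Since $\eta_pC^1(x)=\gamma_{px}(1)=x$ for every $x\in M$ regardless of the choice of $\eta_p$, being $p^1$-convex says precisely that any two points of $A$ are joined by a length-minimizing geodesic lying in $A$; thus $A$ is geodesically convex. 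Equivalently, one may simply cite the reverse implication of Theorem~2.1.

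I do not expect a genuine obstacle here. The only delicate point is the quantifier on $\eta_p$ concealed inside the phrase ``totally $p$-convex'', and it is dispatched by the observation in the previous paragraph that the $\lambda=1$ instance already forces geodesic convexity irrespective of which $\eta_p$ is used; everything else is a direct appeal to Theorem~2.1.
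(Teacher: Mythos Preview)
Your proposal is correct and matches the paper's treatment: the corollary is stated immediately after Theorem~2.1 with no separate proof, so it is read off directly from that theorem exactly as you describe. Your extra remark on the $\eta_p$-quantifier and the $\lambda=1$ case is a welcome clarification but goes slightly beyond what the paper spells out.
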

\begin{prop}
In $\mathbb{R}^n$, a subset $A$ is $p^\lambda$-convex for some $p\in A$ and $\lambda\in (0,1]$ if and only if $C_p^\lambda(A)$ is $p^{\lambda_1}$-convex for some $\lambda_1\in (0,1]$.
\end{prop}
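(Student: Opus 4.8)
The proof is elementary and reduces to the fact that in $\mathbb{R}^n$ the map $C_p^\lambda$ is the affine homothety $C_p^\lambda(x)=\lambda x+(1-\lambda)p$. First I would record three properties: (i) for every $t>0$, $C_p^t$ is a bijection of $\mathbb{R}^n$ with $C_p^t\circ C_p^s=C_p^{ts}$; (ii) being affine, $C_p^t$ maps a segment $[x,y]$ onto the segment $[C_p^t x,\,C_p^t y]$; (iii) in $\mathbb{R}^n$ the length‑minimizing geodesic joining two points is their segment. Property (iii) turns $p^\nu$-convexity into a purely combinatorial condition: a subset $S\subseteq\mathbb{R}^n$ is $p^\nu$-convex precisely when $[C_p^\nu x,\,C_p^\nu y]\subseteq S$ for all $x,y\in S$, equivalently $C_p^\nu\big(K(S)\big)\subseteq S$, where $K(S):=\bigcup_{x,y\in S}[x,y]$ denotes the union of all segments with endpoints in $S$.

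The core of the argument is the following slightly stronger statement: for any $p\in A$ and all $\mu,\nu\in(0,1]$, the set $C_p^\mu(A)$ is $p^\nu$-convex if and only if $A$ is $p^\nu$-convex. To prove it, I would first note that (ii) gives $K\big(C_p^\mu(A)\big)=C_p^\mu\big(K(A)\big)$. Hence $C_p^\mu(A)$ is $p^\nu$-convex iff $C_p^\nu\big(C_p^\mu(K(A))\big)\subseteq C_p^\mu(A)$; by the composition law (i), the left‑hand side equals $C_p^{\nu\mu}(K(A))=C_p^\mu\big(C_p^\nu(K(A))\big)$, so the inclusion reads $C_p^\mu\big(C_p^\nu(K(A))\big)\subseteq C_p^\mu(A)$, which, $C_p^\mu$ being a bijection, is equivalent to $C_p^\nu\big(K(A)\big)\subseteq A$, i.e. to $A$ being $p^\nu$-convex.

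Given this, the Proposition is immediate by specialisation. If $A$ is $p^\lambda$-convex, take $\mu=\nu=\lambda$ to get that $C_p^\lambda(A)$ is $p^\lambda$-convex, hence $p^{\lambda_1}$-convex with $\lambda_1=\lambda$. Conversely, if $C_p^\lambda(A)$ is $p^{\lambda_1}$-convex for some $\lambda_1\in(0,1]$, take $\mu=\lambda$, $\nu=\lambda_1$ to conclude that $A$ is $p^{\lambda_1}$-convex, which is the asserted conclusion. (The crisp form of the result is simply: $A$ is $p^\lambda$-convex iff $C_p^\lambda(A)$ is $p^\lambda$-convex.)

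I do not expect a genuine difficulty; the only subtle point is the handling of the parameters. One must \emph{not} try to run the converse with the same $\lambda$ by a ``monotonicity in $\lambda$'' argument, because that is false: in $\mathbb{R}$, the set $[-\tfrac14,\tfrac14]\cup\{-1,1\}$ is $0^{1/4}$-convex but not $0^{1/2}$-convex. Passing through the stronger statement above avoids this pitfall. One should also observe that $C_p^\mu$ is inverted by the homothety $x\mapsto\tfrac1\mu(x-p)+p$, which has ratio $\ge1$ but is still an affine bijection, so properties (i) and (ii) apply to it as well; this legitimises the step ``$C_p^\mu(X)\subseteq C_p^\mu(Y)\iff X\subseteq Y$'' used above.
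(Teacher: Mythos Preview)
Your argument is correct and rests on the same key fact the paper invokes---that in $\mathbb{R}^n$ the map $C_p^\lambda$ is an affine homothety sending segments to segments---so the approaches coincide; the paper's proof is a single sentence citing this fact and leaving the rest to the reader, whereas you actually carry out the deduction. Your intermediate statement that $A$ is $p^\nu$-convex iff $C_p^\mu(A)$ is $p^\nu$-convex (for any $\mu,\nu$) is a clean strengthening, and your counterexample showing that $p^\lambda$-convexity is not monotone in $\lambda$ is a useful caution absent from the paper.
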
 
\begin{proof}
The proof can be easily deduced from the fact that in $\mathbb{R}^n$, the $\lambda$-contraction of a straight line is also a straight line \cite{BS10}.
\end{proof}
But in case of manifold the above proposition is not true, see example \ref{ex2}.
\begin{exmp}\label{ex2}
Take $M=\mathbb{S}^2$ and for two fixed points $x,y$ on the equator let $A$ be the set $\{\gamma_{xy}(t):t\in [0,1]\}\cup \{p\}$, where $p$ be the north pole, see figure \ref{sphere1}. Then $A$ is $p$-convex. Now for some fixed $\eta_p$ the $p^{1/2}$-radial contraction of $A$ is the set $\{\text{the shortest latitude joining } \eta_pC^{\frac{1}{2}}(x)$ and $\eta_pC^\frac{1}{2}(y)\}\cup \{p\}$ but this set is not $p^\lambda$-convex for any $\lambda\in (0,1]$.
\end{exmp}
\indent One of the main difference between geodesically convex set and $p^\lambda$-convex set is that geodesically convex sets are always path-connected, more specifically geodesically connected but $p^\lambda$-convex set may be disconnected also, e.g. consider $M=\mathbb{S}^2$ and take upper hemisphere together with two distinct points in lower hemisphere, then this set is not path connected but it is $p^\lambda$-convex set for some $\lambda\in(0,1]$, where $p$ is the noth pole of $\mathbb{S}^2$. But not all disconnected sets are $p^\lambda$-convex, e.g.- any finite set containing more than two points in $\mathbb{R}^n$ is not $p^\lambda$-convex for some $\lambda\in(0,1]$.
\begin{prop}\label{open1}
If a subset $A\subset M$ contains an interior point $p$ then $\exists \ \zeta\in (0,1]$ such that $A$ is $p^\lambda$-convex $\forall \lambda\in (0,\zeta]$.
\end{prop}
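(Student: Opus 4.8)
The plan is to use that an interior point sits inside a small geodesically convex ball contained in $A$, and that for small $\lambda$ the $\lambda$-radial contraction crushes all of $A$ into such a ball, where the defining condition of $p^\lambda$-convexity becomes automatic. First, by Whitehead's theorem every point of a Riemannian manifold has arbitrarily small \emph{strongly convex} geodesic balls; I would fix $\rho_0>0$ such that any two points of $B(p,\rho_0)$ are joined by a unique minimizing geodesic of $M$ lying in $B(p,\rho_0)$. Since $p$ is interior to $A$ there is also $\rho_1>0$ with $B(p,\rho_1)\subset A$; setting $\rho=\tfrac12\min\{\rho_0,\rho_1\}$, the ball $B(p,\rho)$ is strongly convex and $B(p,\rho)\subset A$.

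Next I would fix the direction function. By completeness and Hopf--Rinow, for each $x\in M$ choose a minimizing geodesic from $p$ to $x$, reparametrized on $[0,1]$, and let $\eta_p(x)$ be its initial velocity; this $\eta_p$ is admissible, and since a subarc of a minimizing geodesic is minimizing one gets
$$d\bigl(p,\eta_pC^{\lambda}(x)\bigr)=\lambda\,d(p,x)\qquad\text{for all }\lambda\in[0,1],\ x\in M.$$
Let $D=\sup_{x\in A}d(p,x)$; if $D=0$ then $A=\{p\}$ and there is nothing to prove, so suppose $0<D<\infty$ and put $\zeta=\min\{1,\rho/(2D)\}\in(0,1]$. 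Then for every $\lambda\in(0,\zeta]$ and every $x\in A$ the displayed identity yields $d(p,\eta_pC^{\lambda}(x))\le\zeta D\le\rho/2<\rho$, so $\eta_pC^{\lambda}(A)\subset B(p,\rho)$. Finally, given $\lambda\in(0,\zeta]$ and $x,y\in A$, the points $x'=\eta_pC^{\lambda}(x)$ and $y'=\eta_pC^{\lambda}(y)$ both lie in the strongly convex ball $B(p,\rho)$, so the minimizing geodesic $\gamma_{x'y'}$ is contained in $B(p,\rho)\subset A$; this is exactly the assertion that $A$ is $p^{\lambda}$-convex with respect to $\eta_p$, for every $\lambda\in(0,\zeta]$.

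The step I expect to be the genuine obstacle is the finiteness of $D=\sup_{x\in A}d(p,x)$: the argument above needs it, and without some such boundedness the statement is in fact false. For example, take $M=\mathbb{R}^2$ with $p$ the origin and $A=\mathbb{R}^2\setminus S$, where $S$ is an infinite discrete subset of a horizontal line $\{y=h\}$, $h\neq 0$; then $p$ is an interior point of $A$, yet $A$ is $p^{\lambda}$-convex for no $\lambda\in(0,1]$, since for $\lambda\in(0,1)$ the $\lambda$-contraction about $p$ of a sufficiently long horizontal segment of $A$ lying at height $h/\lambda$ is a segment at height $h$ that meets $S$ (for $\lambda=1$ this is just the failure of ordinary geodesic convexity of $A$). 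So in carrying out the proof I would add the hypothesis that $A$ is bounded (equivalently $\sup_{x\in A}d(p,x)<\infty$); apart from this, the only routine points are the open/closed-ball bookkeeping in the choice of $\zeta$ (absorbed by the factor $\tfrac12$), the remark that $\eta_pC^{\lambda}$ is well defined for the chosen $\eta_p$ because a geodesic is determined by its initial velocity, and the classical fact that the convexity radius at $p$ is positive.
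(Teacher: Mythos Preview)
Your approach is precisely the paper's: take a small convex neighbourhood of $p$ inside $A$, then choose $\zeta$ so that the $\lambda$-radial contraction maps all of $A$ into that neighbourhood for $\lambda\le\zeta$. The paper's version is terser---it intersects $B(p,r)$ with a normal neighbourhood $N_p$, sets $\lambda_x$ to be the threshold parameter at which $\eta_pC^{\lambda}(x)$ leaves this set, and takes $\zeta=\inf_{x\in A}\lambda_x$---but the skeleton is identical, and your use of Whitehead's theorem and the identity $d(p,\eta_pC^{\lambda}(x))=\lambda\,d(p,x)$ makes each step explicit where the paper is informal.

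Your diagnosis of the obstruction is also correct, and it applies to the paper's own argument: the paper never checks that $\zeta=\inf_{x\in A}\lambda_x$ is strictly positive, and your counterexample in $\mathbb{R}^2$ (the plane minus an infinite discrete set on a horizontal line) shows that without boundedness of $A$ this infimum can indeed be $0$ and the conclusion fails. So the proposition as stated is false; your added hypothesis $\sup_{x\in A}d(p,x)<\infty$ is exactly what is needed, and with it your proof is complete and slightly sharper than the paper's.
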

\begin{proof}
Let $p\in A$ be an interior point. Then $B(p,r)\subset A$ for some $r>0$. Take $V_p=B(p,r)\cap N_p$, where $B_p$ is the geodesic ball $p$. Now $V_p$ looks like a flat $n$-dimensional Euclidean space. For each $x$ in $A$ take $\lambda_x=\inf\{\lambda\in (0,1]: \eta_pC^\lambda(x)\in V_p\}$. $\lambda_x\neq 0$ since $V_p$ is open so sufficiently small portion of any geodesic emitting from $p$ must lies in $V_p$. Again take $\zeta=\inf\{\lambda_x:x\in A\}$. Hence $A$ be $p^\lambda$-convex for all $\lambda\in (0,\zeta]$.
\end{proof}
\begin{prop}\label{1}
Let $A$ be a nonempty subset of $M$ and $\lambda,\beta\in (0,1]$. Then for $p\in M$ 
$$C^\lambda_p(C^\beta_p(x))=C^{\lambda\beta}_p(x) \text{ for }x\in A.$$
\end{prop}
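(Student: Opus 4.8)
The plan is to unwind the two contractions in terms of the underlying radial geodesic and to use the elementary fact that an affine, constant-speed reparametrisation of a geodesic is again a geodesic. Fix $x\in A$ and write $\gamma=\gamma_{px}\colon[0,1]\to M$ for the geodesic with $\gamma(0)=p$, $\gamma(1)=x$ and $\gamma'(0)=\eta_p(x)$, so that by definition $C^\beta_p(x)=\gamma(\beta)$. Put $y=C^\beta_p(x)=\gamma(\beta)$. The strategy is to exhibit an explicit geodesic from $p$ to $y$ along which the outer contraction $C^\lambda_p(y)$ can be read off, and to check that it produces the point $\gamma(\lambda\beta)=C^{\lambda\beta}_p(x)$.

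Concretely, I would set $\widetilde\gamma(s)=\gamma(\beta s)$ for $s\in[0,1]$. Since $\gamma$ satisfies $\nabla_{\dot\gamma}\dot\gamma=0$, the chain rule gives $\nabla_{\dot{\widetilde\gamma}}\dot{\widetilde\gamma}=0$, so $\widetilde\gamma$ is a geodesic; moreover $\widetilde\gamma(0)=p$, $\widetilde\gamma(1)=\gamma(\beta)=y$ and $\widetilde\gamma'(0)=\beta\,\gamma'(0)=\beta\,\eta_p(x)$. Hence $\widetilde\gamma$ is a geodesic from $p$ to $y$, and evaluating the $\lambda$-radial contraction of $y$ along it yields $C^\lambda_p(y)=\widetilde\gamma(\lambda)=\gamma(\beta\lambda)$. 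Because $\lambda,\beta\in(0,1]$ we have $\lambda\beta\in(0,1]$, so $C^{\lambda\beta}_p(x)=\gamma(\lambda\beta)$ is itself well defined, and comparing the two expressions gives $C^\lambda_p(C^\beta_p(x))=\gamma(\lambda\beta)=C^{\lambda\beta}_p(x)$, as required.

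The delicate point, and the step I expect to be the real obstacle, is that computing $C^\lambda_p(y)$ requires the direction $\eta_p(y)$, and the argument above is valid only if the construction selects $\widetilde\gamma$, i.e.\ only if $\eta_p(y)=\beta\,\eta_p(x)$. In $\mathbb{R}^n$ this is automatic, since the straight segment is the unique geodesic joining $p$ and $y$ and $C^\lambda_p$ carries no ambiguity, which is what makes the statement transparent there. In a general complete $M$ one must either (i) restrict to $x$ (equivalently $\beta$) small enough that $y$ lies in a normal neighbourhood of $p$, so that $\gamma|_{[0,\beta]}$ is the unique minimising geodesic from $p$ to $y$ and $\eta_p(y)$ is forced, or (ii) impose the radial compatibility $\eta_p(\gamma_{px}(t))=t\,\eta_p(x)$ on the direction function as a standing hypothesis. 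I would therefore make this assumption explicit — it is the natural one, since $\eta_pC^\lambda$ is meant to be contraction along the same ray — and then the three definitional substitutions $C^\beta_p(x)=\gamma(\beta)$, $C^\lambda_p(y)=\widetilde\gamma(\lambda)$ and $\widetilde\gamma(\lambda)=\gamma(\lambda\beta)$ close the proof.
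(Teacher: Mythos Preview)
The paper states this proposition without proof, treating it as immediate from the definitions; your argument is correct and in fact supplies more than the paper does. Your reparametrisation $\widetilde\gamma(s)=\gamma(\beta s)$ is exactly the computation the authors have in mind, and the subtlety you flag about $\eta_p(y)$ is genuine: the paper's switch from the notation $\eta_pC^\lambda$ to $C^\lambda_p$ (introduced only for $\mathbb{R}^n$, where $\eta_p$ is unique) tacitly presumes the radial compatibility you describe, without ever making it explicit.
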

 \begin{prop}
 Let $A\subset M$ be $p^\lambda$-convex set. Then $A$ is also $p^{\lambda^n}$-convex $\forall n\in \mathbb{N}$.
 \end{prop}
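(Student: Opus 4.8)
The plan is to induct on $n$, using Proposition \ref{1} — the semigroup identity $\eta_pC^\lambda\!\left(\eta_pC^\beta(x)\right)=\eta_pC^{\lambda\beta}(x)$ — as the engine. I would fix once and for all the direction function $\eta_p$ with respect to which $A$ is $p^\lambda$-convex, and prove that $A$ is $p^{\lambda^n}$-convex with respect to this same $\eta_p$.

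The base case $n=1$ is exactly the hypothesis. For the inductive step I would assume $A$ is $p^{\lambda^n}$-convex and take arbitrary $x,y\in A$. The first thing to check is that $u:=\eta_pC^{\lambda^n}(x)$ and $v:=\eta_pC^{\lambda^n}(y)$ again lie in $A$; this is immediate, since $p^{\lambda^n}$-convexity puts the entire length-minimizing geodesic from $u$ to $v$ inside $A$, hence in particular its endpoints. Then I would apply the $p^\lambda$-convexity of $A$ to the pair $u,v\in A$: the length-minimizing geodesic joining $\eta_pC^\lambda(u)$ and $\eta_pC^\lambda(v)$ stays in $A$. Finally, Proposition \ref{1} gives $\eta_pC^\lambda(u)=\eta_pC^\lambda\!\left(\eta_pC^{\lambda^n}(x)\right)=\eta_pC^{\lambda^{n+1}}(x)$ and, in the same way, $\eta_pC^\lambda(v)=\eta_pC^{\lambda^{n+1}}(y)$, so that geodesic is exactly a length-minimizing geodesic between $\eta_pC^{\lambda^{n+1}}(x)$ and $\eta_pC^{\lambda^{n+1}}(y)$ lying in $A$. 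That is the definition of $p^{\lambda^{n+1}}$-convexity, and the induction closes.

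I do not expect a genuine obstacle here; the one thing to be careful about is the bookkeeping of the direction function when the contraction is iterated. Applying $\eta_pC^\lambda$ to the already-contracted point $\eta_pC^{\lambda^n}(x)$ tacitly requires choosing a geodesic from $p$ to that point, and Proposition \ref{1} is precisely the statement that, with the compatible choice (the initial sub-arc of the geodesic realizing $\eta_p(x)$), the composition collapses to $\eta_pC^{\lambda^{n+1}}$. Since Proposition \ref{1} is available, this is automatic — provided, as emphasized above, that the same $\eta_p$ is used at every stage, which is why the conclusion should be read ``with respect to $\eta_p$''. No curvature or extra completeness assumptions are needed beyond those already standing, and the whole argument stays within the one-parameter family of contractions based at the single point $p$.
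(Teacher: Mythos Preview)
Your proposal is correct and follows essentially the same approach as the paper's proof: both iterate the $p^\lambda$-contraction, use that the endpoints of the resulting geodesic land back in $A$ to feed the next step, and invoke Proposition~\ref{1} to collapse the composite $\eta_pC^\lambda\circ\cdots\circ\eta_pC^\lambda$ into $\eta_pC^{\lambda^m}$. Your write-up is simply a cleaner formal induction (and you are more explicit about the direction-function bookkeeping), whereas the paper phrases the same iteration informally as ``continuing this way.''
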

 \begin{proof}
 Fixed $m\in \mathbb{N}$ and choose two points $x,y$ from $A$. Since $A$ is $p^\lambda$-convex, hence $\gamma_{x_1y_1}$ belongs to $A$ where $x_1=C^\lambda_p(x)$ and $y_1=C^\lambda_p(y)$. Again by similar argument taking $x_2=C^\lambda_p(x_1)$ and $y_2=C^\lambda_p(y_1)$ we get $\gamma_{x_2y_2}$ belongs to $A$. Hence continuing this way we get $\gamma_{x_ny_n}\in A$ where $x_n=C^\lambda_p(x_{n-1})$ and $y_n=C^\lambda_p(y_{n-1})$. Then from Proposition \ref{1} we get $x_m=C^{\lambda}_p\circ C^{\lambda}_p\circ \cdots \circ C^{\lambda}_p(x)$ (m-times) and $y_m=C^{\lambda}_p\circ C^{\lambda}_p\circ \cdots \circ C^{\lambda}_p(y)$ (m-times). So, $C^{\lambda^m}_p(x)$, $C^{\lambda^m}_p(y)$ and the geodesic arc joining these two points belong to $A$. Hence $A$ is $p^{\lambda^m}$-convex for any $m\in \mathbb{N}$.
 \end{proof}
 \begin{thm}
 Let $\{A_i\}_{i\in \lambda}$ be an arbitrary collection of $p^\lambda$-convex sets and $\bigcap_{i\in\Lambda}A_i$ is nonempty. Then $\bigcap_{i\in\Lambda}A_i $ is also $p^\lambda$-convex.
 \end{thm}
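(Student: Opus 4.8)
The plan is to reduce the whole statement to the fact that the contraction map involved is a single fixed map. By the standing convention adopted just after the definition of $p^\lambda$-convexity, I will assume that all the $A_i$ are $p^\lambda$-convex with respect to one and the same direction function $\eta_p$ (this is what makes the common parameter $\lambda$ and the notation $\bigcap_{i\in\Lambda}A_i$ meaningful); then $\eta_pC^\lambda:M\to M$ is determined once and for all, so for any two points the contracted points do not depend on the index. Concretely, first I would write $A:=\bigcap_{i\in\Lambda}A_i$, which is nonempty by hypothesis, fix arbitrary $x,y\in A$, and set $x':=\eta_pC^\lambda(x)$ and $y':=\eta_pC^\lambda(y)$; since $x,y\in A_i$ for every $i\in\Lambda$, these are the relevant contracted points for each $A_i$ simultaneously.

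Next, for each $i\in\Lambda$, the $p^\lambda$-convexity of $A_i$ supplies a length minimizing geodesic $\gamma^{(i)}_{x'y'}:[0,1]\to M$ from $x'$ to $y'$ with $\gamma^{(i)}_{x'y'}(t)\in A_i$ for all $t\in[0,1]$. The crucial observation is that all of these are the same curve: between the fixed pair $x'$, $y'$ there is a single length minimizing geodesic $\gamma_{x'y'}$ in the framework used throughout the paper (the same uniqueness used to declare $\eta_pC^\lambda$ well defined), so $\gamma^{(i)}_{x'y'}=\gamma_{x'y'}$ for every $i$. Hence $\gamma_{x'y'}(t)\in A_i$ for every $i\in\Lambda$ and every $t\in[0,1]$, i.e. $\gamma_{x'y'}(t)\in\bigcap_{i\in\Lambda}A_i=A$ for all $t\in[0,1]$. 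Since $x,y\in A$ were arbitrary, $A$ is $p^\lambda$-convex with respect to $\eta_p$, as claimed. This mirrors the classical proof that an intersection of convex sets is convex, the only new ingredient being the bookkeeping with $\eta_p$.

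The step I expect to be the sole real obstacle is precisely the claim that the geodesics $\gamma^{(i)}_{x'y'}$ coincide. If $x'$ and $y'$ were joined by two distinct length minimizing geodesics, one $A_i$ could contain the first and another $A_j$ only the second, so $A$ would contain neither and the bare existential form of the definition would not pass to the intersection. I would resolve this either by invoking the uniqueness of minimizing geodesics that is implicit in the paper's setup, or, for a hypothesis-free version, by noting that $x'$ and $y'$ always lie on the common geodesic $\gamma_{px}$, $\gamma_{py}$ emanating from $p$ and using that within a totally normal/convex neighbourhood the minimizing geodesic between them is unique; alternatively one can simply strengthen the conclusion to say that $A$ contains every minimizing geodesic between $x'$ and $y'$ that is common to all the $A_i$. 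Everything else — the independence of $x'$, $y'$ from $i$, the membership bookkeeping, and the final quantifier over $x,y$ — is routine.
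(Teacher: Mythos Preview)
Your proof is correct and follows the same line as the paper's: take $x,y$ in the intersection, pass to the contracted points $x'=\eta_pC^\lambda(x)$, $y'=\eta_pC^\lambda(y)$, and invoke the $p^\lambda$-convexity of each $A_i$. If anything you are more careful than the paper, which only records that $x',y'\in A_i$ for all $i$ and then concludes directly, without writing out the geodesic-containment step or the uniqueness issue you explicitly flag.
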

 \begin{proof}
 Let $x,y\in \bigcap_{i\in\Lambda}A_i$. Hence $x,y\in A_i$ for all $i\in \Lambda$. Now by definition of $p^\lambda$-convex set, we have $\eta _pC^\lambda(x), \eta _pC^\lambda(y)\in A_i \ \forall i\in\Lambda$. Hence $\bigcap_{i\in\Lambda}A_i$ is $p^\lambda$-convex.
 \end{proof}
 \begin{thm}
 For any $p^\lambda$-convex set $A\subset M$ there exists a geodesically convex subset $V$ containing $p$ such that $V\subset A$. 
 \end{thm}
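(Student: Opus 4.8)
The plan is to exhibit $V$ as a single geodesic arc issuing from $p$. Since $V$ must contain $p$ while $V\subset A$, the hypothesis has to be read as including $p\in A$; I shall assume this, and I may also assume $\lambda<1$, because $p^1$-convexity is exactly geodesic convexity of $A$ and one may then take $V=A$. So fix any $q\in A$ and let $\gamma_{pq}:[0,1]\to M$ be the length-minimizing geodesic from $p$ to $q$ with $\gamma_{pq}'(0)=\eta_p(q)$, so that $\eta_pC^{t}(q)=\gamma_{pq}(t)$ for all $t\in[0,1]$. My claim is that $V:=\gamma_{pq}\big([0,\lambda]\big)$ works.

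The one step that needs an argument is that $V\subseteq A$. Taking $x=y=q$ in the definition of $p^\lambda$-convexity shows $\gamma_{pq}(\lambda)=\eta_pC^\lambda(q)\in A$. I would then prove by induction on $n\ge 1$ that $\gamma_{pq}\big([\lambda^{n+1},\lambda^{n}]\big)\subseteq A$: for $n=1$ apply $p^\lambda$-convexity to the pair $q,\gamma_{pq}(\lambda)\in A$; by Proposition \ref{1} (with base point $q$) their $\lambda$-radial contractions are $\gamma_{pq}(\lambda)$ and $\eta_pC^{\lambda^{2}}(q)=\gamma_{pq}(\lambda^{2})$, and the minimizing geodesic joining them is the subarc $\gamma_{pq}|_{[\lambda^{2},\lambda]}$, which is therefore in $A$. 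The inductive step is the same, applied to $\gamma_{pq}(\lambda^{n}),\gamma_{pq}(\lambda^{n-1})\in A$. Taking the union over $n$ gives $\gamma_{pq}\big((0,\lambda]\big)\subseteq A$, and adjoining $p=\gamma_{pq}(0)\in A$ yields $V\subseteq A$. Finally $V$ contains $p=\gamma_{pq}(0)$ and is geodesically convex, because for $\gamma_{pq}(s),\gamma_{pq}(t)\in V$ with $0\le s\le t\le\lambda$ the reparametrized subarc $u\mapsto\gamma_{pq}\big(s+u(t-s)\big)$ is a minimizing geodesic between them with image inside $\gamma_{pq}([s,t])\subseteq V$. (If $q=p$ this collapses harmlessly to $V=\{p\}$.)

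So the theorem itself is not deep, and the main obstacle is a matter of care rather than of substance. In Step 1 one must know that the minimizing geodesic between two points of $\gamma_{pq}$ to which $p^\lambda$-convexity refers is the subarc of $\gamma_{pq}$ — automatic when that subarc is the unique minimizer, e.g. as long as one stays off the cut locus of $p$ — which is exactly the consistency of the direction function already implicit in Proposition \ref{1}. It is worth stressing that one cannot expect a ``fat'' $V$ in general: the natural guess of taking the geodesic convex hull of $\eta_pC^\lambda(A)$ succeeds in $\mathbb{R}^{n}$, where $\eta_pC^{\lambda^{n+1}}(\operatorname{conv}A)\subseteq A$ follows from affineness of the contraction together with Carathéodory's theorem, but it breaks on a manifold precisely because the $\lambda$-radial contraction there neither carries geodesics to geodesics (the sphere example) nor commutes with convex-hull formation; if a larger $V$ is wanted one can instead apply Zorn's lemma to the family of geodesically convex subsets of $A$ containing $p$, which is nonempty and closed under unions of chains.
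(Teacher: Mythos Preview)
Your argument is correct, and it takes a genuinely different route from the paper. The paper builds a ``fat'' candidate
\[
V=\bigcup_{x,y\in A}\bigl\{\gamma_{x'y'}:x'=\eta_pC^\lambda(x),\ y'=\eta_pC^\lambda(y)\bigr\},
\]
i.e.\ the union of all minimizing geodesics joining $\lambda$-contracted pairs, and then argues that any two points of $V$ are again of the form $\eta_pC^\lambda(x'),\eta_pC^\lambda(y')$ with $x',y'\in A$, whence $p^\lambda$-convexity gives the joining geodesic inside $V$. You instead produce a ``thin'' $V$, a single geodesic arc $\gamma_{pq}([0,\lambda])$, and use the iteration $\eta_pC^\lambda\!\circ\eta_pC^\lambda=\eta_pC^{\lambda^2}$ to fill it in segment by segment. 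Your version is more elementary and in fact more robust: the step in the paper's proof that an arbitrary point of $V$ is itself a contracted point is not justified (interior points of $\gamma_{x'y'}$ need not lie in $\eta_pC^\lambda(A)$ on a curved manifold, exactly the phenomenon you flag in your last paragraph), whereas your inductive argument only ever applies the definition to points you already know lie in $A$. The price you pay is that your $V$ is one-dimensional; the paper's $V$, when the argument can be made to work (e.g.\ in $\mathbb{R}^n$), is as large as one could hope for. Two small remarks: your parenthetical ``with base point $q$'' after Proposition~\ref{1} should read ``with base point $p$''; and your tacit assumption $p\in A$ is indeed forced by the conclusion and is used (silently) in the paper's proof as well, since $p\in V$ there only follows by taking $x=y=p$.
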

 \begin{proof}
 Let $A$ be $p^\lambda$-convex set. Take $$V=\bigcup_{x,y\in A} \Big\{\gamma_{x'y'}:x'=C^\lambda_p(x),y'=C^\lambda_p(y)\Big\}.$$ We shall show that $V$ is geodesically convex. Choose any two points $x,y$ from $B$. Then $x=C^\lambda_p(x')$ and $y=C^\lambda_p(y')$ for some $x',y'\in A$. Since $A$ is $p^\lambda$-convex, the geodesic $\gamma_{xy}\in A$ implies $\gamma_{xy}$ belongs to $V$. Hence $V$ is geodesically convex.
 \end{proof}
 \section*{Acknowledgement}
 The second author is thankful to The University Grants Commission, Government of India for giving the award of Junior Research Fellowship
 during the tenure of preparation of this research paper.

\end{document}